\newcommand\R{\mathbb R}
\newcommand\N{\mathbb N}
\newcommand\cB{{\mathcal B}}
\newcommand\cD{{\mathcal D}}
\newcommand\cS{{\mathcal S}}
\newcommand\cP{{\mathcal P}}
\newcommand{\supp}{\operatorname{supp}}
\newcommand{\pr}{\operatorname{pr}}
\newcommand{\Graph}{\operatorname{Graph}}
\newcommand{\clos}{\operatorname{cl}}
\newcommand{\cc}{\operatorname{cc}}
\newcommand\comp{^{\sf c}}
\newtheorem{theorem}{Theorem}[section]
\newtheorem{lemma}[theorem]{Lemma}
\newtheorem{remark}[theorem]{Remark}
\newtheorem{corollary}[theorem]{Corollary}
\newtheorem{proposition}[theorem]{Proposition}
\title{Random Set Solutions to Stochastic Wave Equations}
\author{Michael Oberguggenberger\thanks{Unit of Engineering Mathematics, University of Innsbruck,
Technikerstra\ss e 13, 6020 Innsbruck,
Austria, (michael.oberguggenberger@uibk.ac.at)}
 \and
Lukas Wurzer\thanks{Johannesplatz 1, 77815 B\"{u}hl, Germany
(wurzer.lukas@gmail.com)}
}
\date{}
\begin{document}
\maketitle

\begin{abstract}% No length restrictions, but keep things reasonable
  This paper is devoted to three topics. First, proving a measurability theorem for multifunctions with values in non-metrizable spaces, which is required to show that solutions to stochastic wave equations with interval parameters are random sets; second, to apply the theorem to wave equations in arbitrary space dimensions; and third, to computing upper and lower probabilities of the values of the solution in the case of one space dimension.
\end{abstract}
%\begin{keywords}% Keywords provide context for readers, so put in up to about half a dozen
%  random sets, fundamental measurability theorem, non-metrizable spaces, stochastic wave equations
%\end{keywords}

\section{Introduction}\label{sec:intro}
Let $\left(\Omega,\Sigma,P\right)$ be a probability space, $S$ a topological space, $\mathcal P\left(S\right)$ the power set of $S$ and $\mathcal B\left(S\right)$
the Borel $\sigma$-algebra on $S$. A multifunction $X:\Omega\to\mathcal P\left(S\right)$ is a \emph{random set}, if the upper inverses
\begin{equation}\label{borel}
X^-(B) = \{\omega\in\Omega:X(\omega)\cap B\neq\emptyset\}
\end{equation}
are measurable for every Borel set $B\in \mathcal B\left(S\right)$. Consider the stochastic wave equation
\begin{equation}
\label{swec}
\left(\partial_t^2-c^2\Delta\right)u_c=\dot W
\end{equation}
on $\R^{d+1}$, where $\dot W$ is space-time Gaussian white noise, $\Delta = \partial_{x_1}^2 + \cdots + \partial_{x_d}^2$ is the Laplacian. It is well-known \cite{Walsh} that in space dimensions $d\geq 2$, the solution $u_c$ is a generalized stochastic process, that is, its trajectories $u_c(\omega)$ belong to the Schwartz space of distributions
$\mathcal D'(\R^{d+1})$. (For all notions from the theory of distributions we refer to \cite{hor}.)
Replacing the propagation speed $c$ by an interval $[\underline c,\overline c]$ with $\underline c > 0$ results in a multifunction
\begin{equation}\label{swrs}
    X(\omega) = \left\{u_c(\omega):\underline c\leq c\leq\overline c\right\}
\end{equation}
with values in the power set of $\mathcal D'(\R^{d+1})$. A Castaing representation of this multifunction is readily obtained from the continuous dependence of $u_c$ on its parameter $c$. Endowing $\mathcal D'(\R^{d+1})$ with the weak-$\ast$ topology, one would like to show that $X$ is a random set. However, $\mathcal D'(\R^{d+1})$ is not metrizable
and so the Fundamental Measurability Theorem \cite{mol} for Polish spaces does not apply.

The first part of the paper will be devoted to proving a generalization of the Fundamental Measurability Theorem, showing that a sequentially compact, Effros measurable  multifunction (see below) with values in the dual of a separable locally convex space, which is itself separable and a Souslin space, is a random set.
The second part of the paper provides the details required to apply this result to the wave equation in arbitrary space dimensions, culminating in the proof that the multifunction $X$ in \eqref{swrs} is a random set.
In the third part we will focus on the stochastic wave equation in one space dimension ($d=1$). In this case, the solution is a modified Brownian sheet and consequently it has almost surely continuous trajectories. Thus one can consider the $\R$-valued multifunctions
\begin{equation}\label{swrs1d}
    X(x,t,\omega) = \left\{u_c(x,t,\omega):\underline c\leq c\leq\overline c\right\}
\end{equation}
at fixed $(x,t)$. It turns out that the maps
$(r,\omega) \to \frac2t\, u_{1/r}(x,t,\omega)$
are Brownian motions with respect to $r \in [0,\infty)$. Using well-known formulas for the first hitting times, this will allow us to compute the upper and lower distribution functions of the random sets \eqref{swrs1d}.

The presented results have been obtained by the second author in \cite{Wurzer}. Various other generalizations of the Fundamental Measurability Theorem beyond Polish spaces can be found in \cite{averna, castaing, Himmelberg}.

\section{Measurability of Multifunctions}
\label{sec:meas}
As suggested by the defining property, a random set is also referred to as a \emph{Borel measurable multifunction}. Various measurability properties will play a role in the Fundamental Measurability Theorem. A multifunction $X$ is called \emph{Effros measurable} if its upper inverses in \eqref{borel} are measurable for every open set $B\subset S$. The multifunction $X$ is called \emph{graph measurable} if its graph
\begin{equation*}
    \Graph\left(X\right)=\left\{(\omega,x)\in\Omega\times S:x\in X(\omega)\right\}
\end{equation*}
is measurable, that is, belongs to $\Sigma\otimes\mathcal B\left(S\right)$. A random variable $\xi$ with values in $S$ is called a selection of $X$ if $\xi(\omega) \in X(\omega)$ for almost all $\omega\in\Omega$. Let $X$ be a closed-valued random set. A countable family of selections $\xi_n$ is said
to be a \emph{Castaing representation} of $X$ if
\begin{equation}
  X(\omega)=\clos\left\{\xi_n(\omega),n\geq 1\right\}\label{sel}
\end{equation}
for (almost) all $\omega\in\Omega$, where $\clos$ denotes the closure in $S$.
\begin{remark}
\label{rem1}
If a multifunction is Effros measurable, then its lower inverses
\begin{equation}\label{lowinv}
   X_-(B) = \{\omega\in\Omega: X(\omega) \subset B\}
\end{equation}
are measurable for every closed set $B\subset X$, and vice versa. This follows from the formula $X^-(B\,\comp) = (X_-(B))\comp$.

(b) If a closed valued multifunction $X:\Omega\to S$ into some topological space $S$ has a Castaing representation, then it is Effros measurable. Indeed, let $\left(\xi_n\right)_{n\in\N}$ be a Castaing representation of $X$ and let $B\subset S$ open. On the one hand, $\xi_n(\omega)\in X(\omega)$ for (almost) all $\omega\in\Omega$ and all $n\in\N$. Thus $\left\{\omega:\xi_n\left(\omega\right)\in B\right\}\subset X^-\left(B\right)$ for all $n\in\N$. On the other hand, let
$\omega\in X^-(B)$. Since $B$ is open, \eqref{sel} implies the existence of an $n\in\N$ such that $\xi_n(\omega)\in B$ for (almost) all $\omega\in\Omega$. Thus
\begin{equation*}
    X^-\left(B\right)=\bigcup_{n\in\N}\left\{\omega:\xi_n\left(\omega\right)\in B\right\}.
  \end{equation*}
Measurability of $X^-(B)$ follows from measurability of the selections $\xi_n$.

(c) When talking about multifunctions $X:\Omega\to\mathcal P\left(S\right)$ we assume from now on that
\[
   X(\omega) \neq \emptyset \quad \mbox{for (almost)\ all\ }\omega\in\Omega.
\]
\end{remark}
Finally, a \emph{Polish space} is a separable, metrizable and complete topological space, while a \emph{Souslin space} is a Hausdorff space which is the continuous image of a Polish space.
Let $\left(\Omega,\Sigma,P\right)$ be a complete probability space, $S$ a Polish space, and $X:\Omega\to\mathcal P\left(S\right)$ a closed-valued multifunction. The Fundamental Measurability Theorem \cite{mol} says that $X$ is Borel measurable if and only if it is Effros measurable if and only if it is graph measurable if and only if it admits a Castaing representation.

In what follows, $E$ is a Hausdorff locally convex topological vector space and $E'$ its continuous dual, equipped with the weak-$\ast$ topology. We denote by
$\langle x,e\rangle$ the action of $x\in E'$ on $e\in E$. The probability space $(\Omega, \Sigma, P)$ is assumed to be complete throughout. Here is the main result of this section.
\begin{theorem}\label{thm:fund}
Assume that both $E$ and $E'$ are separable and that $E'$ is Souslin. Let $X:\Omega\to\mathcal P\left(E'\right)$ be an Effros measurable multifunction with sequentially compact values. Then $X$ is Borel measurable, i.\,e., a random set.
\end{theorem}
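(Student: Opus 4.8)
The plan is to transport the problem to a metrizable (indeed Polish) setting, apply the classical Fundamental Measurability Theorem there, and then return by a measurable projection argument. Since $E$ is separable, I would fix a countable dense set $\{e_j : j\in\N\}\subset E$ and define
\[
   T:E'\to\R^\N,\qquad T(x)=\bigl(\langle x,e_j\rangle\bigr)_{j\in\N}.
\]
Each coordinate $x\mapsto\langle x,e_j\rangle$ is weak-$\ast$ continuous, so $T$ is continuous and hence Borel measurable. Because the functionals in $E'$ are continuous and $\{e_j\}$ is dense, $T$ is injective: $T(x)=T(y)$ forces $\langle x-y,e_j\rangle=0$ for all $j$, hence $\langle x-y,e\rangle=0$ for all $e\in E$, so $x=y$. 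I would then record the two consequences that drive the proof: $T$ is a continuous injection into the Polish space $\R^\N$, and $\mathrm{id}\times T:\Omega\times E'\to\Omega\times\R^\N$ is $\bigl(\Sigma\otimes\mathcal B(E'),\,\Sigma\otimes\mathcal B(\R^\N)\bigr)$-measurable.

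Next I would introduce the transported multifunction $Y=T\circ X$, i.e.\ $Y(\omega)=T\bigl(X(\omega)\bigr)\subset\R^\N$, and verify two properties. First, $Y$ is Effros measurable: for open $U\subset\R^\N$ one has $Y^-(U)=X^-\bigl(T^{-1}(U)\bigr)$, and $T^{-1}(U)$ is open in $E'$ by continuity, so $Y^-(U)\in\Sigma$ follows from the Effros measurability of $X$. Second, $Y$ is compact-valued: the continuous image of a sequentially compact set is sequentially compact, and in the metrizable space $\R^\N$ sequential compactness coincides with compactness, so each $Y(\omega)$ is compact and in particular closed. Thus $Y$ is a closed-valued, Effros measurable multifunction into a Polish space over a complete probability space, and the Fundamental Measurability Theorem applies: $Y$ is a random set, whence $\Graph(Y)\in\Sigma\otimes\mathcal B(\R^\N)$.

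I would then pull the graph back. Using injectivity of $T$, for every $(\omega,x)$ one has $x\in X(\omega)\iff T(x)\in T\bigl(X(\omega)\bigr)=Y(\omega)$, so that
\[
   \Graph(X)=(\mathrm{id}\times T)^{-1}\bigl(\Graph(Y)\bigr)\in\Sigma\otimes\mathcal B(E'),
\]
i.e.\ $X$ is graph measurable. To upgrade this to Borel measurability I would invoke the measurable projection theorem — this is exactly where the Souslin hypothesis on $E'$ enters: for any $B\in\mathcal B(E')$,
\[
   X^-(B)=\pr_\Omega\bigl(\Graph(X)\cap(\Omega\times B)\bigr),
\]
and since $\Graph(X)\cap(\Omega\times B)\in\Sigma\otimes\mathcal B(E')$, $E'$ is Souslin, and $(\Omega,\Sigma,P)$ is complete, the projection lies in $\Sigma$. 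This yields measurability of $X^-(B)$ for every Borel $B$, so $X$ is a random set.

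The steps involving $T$ are essentially formal; the genuine obstacles are two. The first is ensuring that sequential compactness of the values — rather than full compactness — suffices, which is resolved precisely by passing through the metrizable image $\R^\N$, where sequential compactness upgrades to compactness and hence to closedness, so that the Polish-space theorem applies. The second, and the real engine of the argument, is the measurable projection theorem over a Souslin fibre space; here I would need to state its hypotheses with care (completeness of $\Sigma$ together with the Souslin property of $E'$) and to confirm that $\Graph(X)$ genuinely lies in the product $\sigma$-algebra $\Sigma\otimes\mathcal B(E')$, which is what legitimizes the passage from graph measurability to full Borel measurability.
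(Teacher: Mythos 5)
Your proof is correct, and it reaches the key intermediate step --- graph measurability of $X$ --- by a genuinely different route than the paper. The paper works intrinsically in $E'$: it uses the dense sequences $(e_i)\subset E$ \emph{and} $(x_l)\subset E'$ to build a countable family $(V_j)$ of basic weak-$\ast$ neighborhoods with the separation property that any $x_0$ outside a sequentially compact set $A$ lies in some $V_j$ disjoint from $A$ (Lemmas~\ref{PR} and \ref{le}, Corollary~\ref{sep}), and then writes $\Graph(X)\comp=\bigcup_n X_-(V_n\comp)\times V_n$ directly. You instead push everything forward through the continuous injection $T(x)=(\langle x,e_j\rangle)_{j\in\N}$ into the Polish space $\R^\N$, invoke the classical Fundamental Measurability Theorem for the compact-valued image multifunction $Y=T\circ X$, and pull the graph back via $\Graph(X)=(\mathrm{id}\times T)^{-1}(\Graph(Y))$; the final passage from graph measurability to Borel measurability via the projection theorem is identical in both arguments, and your identification of where the Souslin hypothesis and the completeness of $(\Omega,\Sigma,P)$ enter matches the paper exactly. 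Your route is shorter if one accepts the Polish-space theorem as a black box, and it has the mild advantage of never using the separability of $E'$ (only separability of $E$, for the embedding, and the Souslin property, for the projection); the role played by the paper's separation lemmas --- turning sequential compactness into something exploitable --- is absorbed in your argument by the observation that sequential compactness upgrades to compactness, hence closedness, in the metrizable image $\R^\N$. The paper's construction, by contrast, is self-contained and exhibits explicitly the countable neighborhood system doing the work, at the cost of the extra density hypothesis on $E'$.
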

The proof requires a few preparations. Let $(e_i)_{i\in\N}\subset E$ and $(x_i)_{i\in\N}\subset E'$ be dense sequences. Given $x\in E'$, the sets
\[
  U_{mn}(x)=\left\{y\in E' : \left|\left<y-x,e_1\right>\right|<\tfrac 1 m,\dots,\left|\left<y-x, e_n\right>\right|<\tfrac 1 m\right\}
\]
with $m,n\in\N$, form a countable set of neighborhoods of $x$. In the following lemmas, $E'$ is not required to be a Souslin space.
\begin{lemma}\label{PR}
  Let $A\subset E'$ be sequentially compact and $x_0\in E'$ such
  that $x_0\notin A$. Then there exists an $n\in\N$ such that
  $U_{1n}(x_0)\cap A=\emptyset$.
\end{lemma}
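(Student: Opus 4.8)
The plan is to argue by contradiction, exploiting the sequential compactness of $A$ to produce a limit point that is forced to coincide with $x_0$. Suppose the conclusion fails, so that $U_{1n}(x_0)\cap A\neq\emptyset$ for every $n\in\N$. For each $n$ I would then pick a point $y_n\in U_{1n}(x_0)\cap A$; since the defining conditions of $U_{1n}(x_0)$ are nested in $n$, this choice means
\[
  |\langle y_n-x_0,e_i\rangle|<1 \quad\text{for all } i=1,\dots,n.
\]
Because $A$ is sequentially compact, the sequence $(y_n)$ admits a subsequence $(y_{n_k})$ converging in the weak-$\ast$ topology to some limit $y^\ast\in A$.

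Next I would pass to the limit along this subsequence. Fix an index $i$. For every $k$ with $n_k\geq i$ the estimate $|\langle y_{n_k}-x_0,e_i\rangle|<1$ is available, and weak-$\ast$ convergence gives $\langle y_{n_k},e_i\rangle\to\langle y^\ast,e_i\rangle$. Hence $|\langle y^\ast-x_0,e_i\rangle|\leq 1$ for every $i\in\N$. The anticipated obstacle is precisely here: the strict inequalities degrade to non-strict ones in the limit, so one cannot directly conclude that $y^\ast$ lies in any $U_{1n}(x_0)$, let alone that $y^\ast=x_0$, from these finitely many coordinate estimates alone.

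To overcome this I would combine density with linearity. Write $z=y^\ast-x_0\in E'$. Since $z$ is a continuous linear functional, the set $\{e\in E:|\langle z,e\rangle|\leq 1\}$ is closed; as it contains every $e_i$ and $(e_i)$ is dense in $E$, it must equal all of $E$. Thus $|\langle z,e\rangle|\leq 1$ for every $e\in E$. A linear functional that is globally bounded by $1$ necessarily vanishes: if $\langle z,e\rangle\neq 0$ for some $e$, then replacing $e$ by a sufficiently large scalar multiple violates the bound. Hence $z=0$, i.e.\ $y^\ast=x_0$. But $y^\ast\in A$ while $x_0\notin A$, a contradiction, which establishes the claim. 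Note that only the density of $(e_i)$ in $E$ and the weak-$\ast$ sequential compactness of $A$ enter the argument; the dense sequence $(x_i)$ plays no role here.
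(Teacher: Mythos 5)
Your proof is correct and follows essentially the same route as the paper's: argue by contradiction, pick points $y_n\in U_{1n}(x_0)\cap A$, extract a weak-$\ast$ convergent subsequence by sequential compactness, and use the density of $(e_i)$ in $E$ to force the limit to coincide with $x_0$. The only organizational difference is that you show directly that the functional $y^\ast-x_0$ is bounded by $1$ on all of $E$ and hence zero, whereas the paper runs the contrapositive (since $\bar x\neq x_0$ there is $e$ with $\left|\langle x_0-\bar x,e\rangle\right|\geq 2$, which is then approximated by some $e_m$ to contradict the membership conditions); the mathematical content is the same.
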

\begin{proof}
  We assume the converse and derive a contradiction. So suppose that $x_0\notin A$ and
  $U_{1n}(x_0)\cap A\neq\emptyset$ for all $n\in\N$. Then one can find
  a sequence $(x_n)_{n\in\N}$ such that $x_n\in U_{1n}(x_0)\cap A$
  for all $n\in\N$. Since $(x_n)_{n\in\N}$ is a subset of $A$, which
  is sequentially compact, we can introduce a convergent subsequence by
  $(n_k)_{k\in\N}\subset\N$, such that
  \begin{equation*}
    \label{xbar}
    \bar x =\lim_{k\to\infty}x_{n_k}\in A.
  \end{equation*}
  From $x_0\notin A$ it follows that $\bar x\neq x_0$, and so there is  $e\in E$
  such that
  \begin{equation}\label{g2}
    \left|\left<x_0-\bar x,e\right>\right|\geq2.
  \end{equation}
  On the other hand, since $(e_m)_{m\in\N}$ is dense in $E$,
  there exists another sequence $(m_k)_{k\in\N}$, such that
  \begin{equation*}
    \lim_{k\to\infty}\left|\left<x_0-\bar x,e-e_{m_k}\right>\right|=0.
  \end{equation*}
  This implies that there exists an $m\in\N$ such that
  \begin{equation}
    \label{l1}
    \left|\left<x_0-\bar x,e-e_m\right>\right|<1.
  \end{equation}
  Equations \eqref{g2} and \eqref{l1} lead to
  \begin{align}
    \label{g1}
    \left|\left<x_0-\bar x,e_m\right>\right|&=\left|\left<x_0-\bar x,e\right>-\left<x_0-\bar x,e-e_m\right>\right| \nonumber \\
     &\geq\left|\left<x_0-\bar
        x,e\right>\right|-\left|\left<x_0-\bar
        x,e-e_m\right>\right|>1.
  \end{align}
  Combining \eqref{g1} and \eqref{g2} gives
  \begin{align*}
    \lim_{k\to\infty}\left|\left<x_0-x_{n_k},e_m\right>\right|&=\big|\left<x_0-\bar x,e_m\right>+\lim_{k\to\infty}\left<\bar x-x_{n_k},e_m\right>\big|\\
    &=\left|\left<x_0-\bar x,e_m\right>\right|>1.
  \end{align*}
  Consequently, there is $l\in\N$ such that
  \begin{equation}
    \label{fk}
    \left|\left<x_0-x_{n_k},e_m\right>\right|>1,\quad \mbox{for\ all\ } k>l.
  \end{equation}
  On the other hand, we have assumed that $x_{n_k}\in U_{1n_k}(x_0)$ for all $k\in\N$, i.\,e.
  \begin{equation*}
    \left|\left<x_{n_k}-x_0,e_1\right>\right|<1,\ \dots\ ,\ \left|\left<x_{n_k}-x_0,e_{n_k}\right>\right|<1.
  \end{equation*}
  But this contradicts \eqref{fk}, if $k$ is large enough such that $n_k>m$.
\end{proof}
\begin{lemma}
  \label{le}
  Let $x_0\in E'$. Then there is an $l\in\N$
  such that
  \begin{align}
      x_0&\in U_{2n}\left(x_l\right),\label{x0u2nxl}\\
      U_{2n}(x_l)&\subset U_{1n}(x_0)\label{u2nxlu1nx0}.
  \end{align}
  \end{lemma}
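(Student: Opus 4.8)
The plan is to reduce both requirements to a single density statement and then close with the triangle inequality; note that no sequential compactness is needed here, only that $E'$ is separable in the weak-$\ast$ topology. Throughout I treat $n$ as fixed, so that $l$ is allowed to depend on $n$ (and on $x_0$). First I would observe that
\[
  U_{2n}(x_0)=\left\{y\in E':\left|\left<y-x_0,e_1\right>\right|<\tfrac12,\dots,\left|\left<y-x_0,e_n\right>\right|<\tfrac12\right\}
\]
is a weak-$\ast$ open neighborhood of $x_0$, being a finite intersection of preimages of the interval $(-\tfrac12,\tfrac12)$ under the weak-$\ast$ continuous evaluation maps $y\mapsto\left<y-x_0,e_i\right>$, $i=1,\dots,n$; it is nonempty because it contains $x_0$.

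Since $(x_i)_{i\in\N}$ is dense in $E'$, there is an index $l\in\N$ with $x_l\in U_{2n}(x_0)$, that is, $\left|\left<x_l-x_0,e_i\right>\right|<\tfrac12$ for $i=1,\dots,n$. This single fact drives the whole argument. The defining inequalities are symmetric in $x_0$ and $x_l$, so $\left|\left<x_0-x_l,e_i\right>\right|<\tfrac12$ for $i=1,\dots,n$, which is precisely the assertion $x_0\in U_{2n}(x_l)$, yielding \eqref{x0u2nxl}.

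For the inclusion \eqref{u2nxlu1nx0} I would take an arbitrary $y\in U_{2n}(x_l)$ and estimate, for each $i=1,\dots,n$,
\[
  \left|\left<y-x_0,e_i\right>\right|\leq\left|\left<y-x_l,e_i\right>\right|+\left|\left<x_l-x_0,e_i\right>\right|<\tfrac12+\tfrac12=1,
\]
so that $y\in U_{1n}(x_0)$; since $y$ was arbitrary, $U_{2n}(x_l)\subset U_{1n}(x_0)$. There is no genuine obstacle in this lemma: the only real content is the choice of the halved radius $\tfrac12$ at the shifted center $x_l$, arranged so that the two error terms in the triangle inequality each stay below $\tfrac12$ and together remain within the full radius $1$ of the target $U_{1n}(x_0)$. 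The purpose of the lemma, presumably, is to replace the arbitrary center $x_0$ by one drawn from the countable dense family $(x_l)$ while keeping control of the surrounding neighborhoods, which is exactly what will make the subsequent measurability arguments reducible to countable operations.
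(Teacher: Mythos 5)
Your proof is correct and follows essentially the same route as the paper: choose $x_l\in U_{2n}(x_0)$ by density of $(x_i)_{i\in\N}$, use the symmetry of the defining inequalities to get \eqref{x0u2nxl}, and close \eqref{u2nxlu1nx0} with the triangle inequality splitting $1=\tfrac12+\tfrac12$. No discrepancies to report.
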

\begin{proof}
  We choose $l$ in a way that
  \begin{equation}
    \label{xlu2nx0}
    x_l\in U_{2n}(x_0),
  \end{equation}
  which means that
  \begin{equation*}
    \left|\left\langle x_l-x_0,e_k\right\rangle\right|<\tfrac{1}{2}\quad \mbox{for\ all\ } k\in\{1,2,\dots,n\}.
  \end{equation*}
  This is possible, since $(x_l)_{l\in\N}$ is dense in $E'$. Then
  \eqref{x0u2nxl} follows immediately from \eqref{xlu2nx0}. Now
  suppose that $y\in U_{2n}(x_l)$, i.\,e.
  \begin{equation*}
    \left|\left<y-x_l,e_k\right>\right|<\tfrac{1}{2}\quad \mbox{for\ all\ } \in\{1,2,\dots,n\}.
  \end{equation*}
  Then we get for all $k\in\{1,2,\dots,n\}$ that
  \begin{equation*}
    \left|\left<y-x_0,e_k\right>\right|\leq\left|\left<y-x_l,e_k\right>\right|+\left|\left<x_l-x_0,e_k\right>\right|<\tfrac{1}{2}+\tfrac{1}{2}=1,
  \end{equation*}
  and so $y\in U_{1n}(x_0)$. This implies \eqref{u2nxlu1nx0}.
\end{proof}
In order to simplify our notation we define a sequence
$\left(V_i\right)_{i\in\N}$ of open subsets of $E'$ such that
\begin{equation*}
  \left\{V_1,V_2,\dots\right\}=\left\{U_{2n}\left(x_l\right):n,l\in\N\right\}.
\end{equation*}
Combining Lemmas~\ref{PR} and \ref{le} leads to the following
\begin{corollary}
  \label{sep}
  For any sequentially compact set $A\subset E'$ and any
  $x_0\in E'$ with $x_0\notin A$, there exists an index $j\in\N$
  such that $x_0\in V_j$ and $V_j\cap A=\emptyset$.
\end{corollary}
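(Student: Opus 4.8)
The plan is to chain the two preceding lemmas, feeding the index produced by the first into the second. First I would apply Lemma~\ref{PR} to the sequentially compact set $A$ and the point $x_0\notin A$; this yields an index $n\in\N$ for which $U_{1n}(x_0)\cap A=\emptyset$. This particular $n$ is the parameter I then carry into Lemma~\ref{le}.

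Next, for this same $n$ and the given $x_0$, Lemma~\ref{le} produces an index $l\in\N$ with $x_0\in U_{2n}(x_l)$ and $U_{2n}(x_l)\subset U_{1n}(x_0)$. Thus $U_{2n}(x_l)$ is a basic neighborhood containing $x_0$ that sits inside a set already known to miss $A$.

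Finally I would invoke the defining enumeration $\{V_1,V_2,\dots\}=\{U_{2n}(x_l):n,l\in\N\}$ to select the index $j\in\N$ with $V_j=U_{2n}(x_l)$. For this $j$ we have $x_0\in V_j$, and since $V_j=U_{2n}(x_l)\subset U_{1n}(x_0)$ while $U_{1n}(x_0)\cap A=\emptyset$, we conclude $V_j\cap A=\emptyset$, which is exactly the assertion.

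The argument is essentially bookkeeping, so I do not anticipate a genuine obstacle. The one point requiring care is consistency of the index $n$: the value obtained from Lemma~\ref{PR} must be precisely the $n$ used when applying Lemma~\ref{le}, since the latter holds for each fixed $n$. Once the inclusions are made to line up as $x_0\in U_{2n}(x_l)\subset U_{1n}(x_0)$ with the outer set disjoint from $A$, a single member $V_j$ of the enumeration is simultaneously a neighborhood of $x_0$ and disjoint from $A$, and the proof is complete.
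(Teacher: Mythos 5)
Your argument is correct and is exactly the combination the paper intends (the paper gives no separate proof, stating only that the corollary follows by combining Lemmas~\ref{PR} and \ref{le}): Lemma~\ref{PR} supplies the $n$ with $U_{1n}(x_0)\cap A=\emptyset$, Lemma~\ref{le} supplies the $l$ with $x_0\in U_{2n}(x_l)\subset U_{1n}(x_0)$, and the enumeration of the $V_j$ finishes it. Your remark about keeping the index $n$ consistent between the two lemmas is the only point of care, and you have handled it correctly.
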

\begin{lemma}
  \label{GX}
  Assume that both $E$ and $E'$ are separable. Let $X:\Omega\to\mathcal P\left(E'\right)$ be an Effros measurable multifunction with sequentially compact values. Then $X$ is graph measurable, i.\,e., $\Graph\left(X\right)$ is $\Sigma\otimes\cB(E')$-measurable.
\end{lemma}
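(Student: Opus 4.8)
The plan is to show that the complement $\Graph(X)\comp$ lies in $\Sigma\otimes\cB(E')$; taking complements then gives the claim. The starting point is the separation statement of Corollary~\ref{sep}. Since every value $X(\omega)$ is sequentially compact, for a fixed $\omega$ a point $x\in E'$ fails to lie in $X(\omega)$ \emph{precisely} when there is an index $j\in\N$ with $x\in V_j$ and $V_j\cap X(\omega)=\emptyset$. The forward implication is exactly Corollary~\ref{sep} applied with $A=X(\omega)$ and $x_0=x$; the converse is immediate, since $x\in V_j$ together with $V_j\cap X(\omega)=\emptyset$ forces $x\notin X(\omega)$. The whole point of this reformulation is that it replaces the condition ``$x\notin X(\omega)$'' by a \emph{countable} disjunction phrased through the fixed neighborhoods $V_j$.

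Using this characterization, I would write the complement of the graph as
\[
  \Graph(X)\comp=\bigcup_{j\in\N}\bigl(\{(\omega,x):x\in V_j\}\cap\{(\omega,x):V_j\cap X(\omega)=\emptyset\}\bigr).
\]
Next I would check that each of the two factors in the $j$-th term is a measurable rectangle. The first is $\Omega\times V_j$, which belongs to $\Sigma\otimes\cB(E')$ because $V_j$ is open, hence Borel. For the second, observe that $V_j\cap X(\omega)=\emptyset$ is equivalent to $\omega\notin X^-(V_j)$, so this set equals $(X^-(V_j))\comp\times E'$; here $X^-(V_j)\in\Sigma$ by Effros measurability of $X$ (recall $V_j$ is open), and hence its complement lies in $\Sigma$ as well. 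Intersecting the two rectangles, the $j$-th term of the union is the measurable rectangle $(X^-(V_j))\comp\times V_j\in\Sigma\otimes\cB(E')$.

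Finally, taking the countable union over $j$ stays inside the product $\sigma$-algebra, so $\Graph(X)\comp\in\Sigma\otimes\cB(E')$ and therefore $\Graph(X)\in\Sigma\otimes\cB(E')$, as required. I do not expect a genuine obstacle here: the real work has already been done in Lemmas~\ref{PR} and~\ref{le} and in Corollary~\ref{sep}, where the separability of $E$ and $E'$ and the sequential compactness of the values were used to produce the countable separating family $(V_j)$. Once that family is available, the argument above is essentially bookkeeping with the product $\sigma$-algebra, and the only subtlety to watch is invoking the equivalence ``$x\notin X(\omega)$ iff some $V_j$ separates $x$ from $X(\omega)$'' in both directions.
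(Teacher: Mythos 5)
Your proposal is correct and follows essentially the same route as the paper: both identify $\Graph(X)\comp$ with $\bigcup_j (X^-(V_j))\comp\times V_j$ (the paper writes the first factor as $X_-(V_j\comp)$, which is the same set), using Corollary~\ref{sep} for the forward inclusion and the trivial observation for the converse. The only cosmetic difference is that the paper routes the measurability of the first factor through Remark~\ref{rem1}(a) rather than directly through $(X^-(V_j))\comp$.
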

\begin{proof}
Recall that the graph of $X$ is
  defined by
  \begin{equation*}
    \Graph(X)=\left\{(\omega,x)\in\Omega\times E',x\in  X(\omega)\right\}.
  \end{equation*}
  We will show that
  \begin{equation*}
    \Graph\left( X\right)\comp=\bigcup_{n\in\N} X_-\left(V_n\comp\right)\times V_n.
  \end{equation*}
  By Remark\;\ref{rem1}(a), this implies the measurability of $ \Graph(X)$. Assume first that
  \begin{equation*}
    (\omega,x)\in\Graph\left( X\right)\comp.
  \end{equation*}
  It follows that $(\omega,x)\notin\Graph\left( X\right)$, and therefore
  $x\notin  X(\omega)$. By assumption, $ X(\omega)$ is
  sequentially compact, and we can apply
  Corollary~\ref{sep}. Consequently, there is $j\in\N$ such that
  $V_j\cap  X(\omega)=\emptyset$ and $x\in V_j$. Hence,
  $\omega\notin  X^-(V_j)$, or equivalently,
  $\omega\in  X_-(V_j\comp)$. We get $(\omega,x)\in
   X_-(V_j\comp)\times V_j$ and it follows that
  \begin{equation}
    \label{omx}
    (\omega,x)\in\bigcup_{n\in\N} X_-(V_n\comp)\times V_n.
  \end{equation}
  Conversely, assume that \eqref{omx} holds. Then there is $j\in\N$
  such that $(\omega,x)\in  X_-(V_j\comp)\times
  V_j$. We follow the arguments above in the opposite direction and
  get $(\omega,x)\in\Graph\left( X\right)\comp$.
\end{proof}
The final ingredient in the proof of Theorem\;\ref{thm:fund}
is the \emph{projection theorem} \cite[Theorem III.23]{castaing}.
\begin{proposition}\label{pro}
  Let $(\Omega,\Sigma,P)$ be a complete probability space, $S$ a Souslin space and $H$ a subset of $\Omega\times S$. If $H$ belongs to
  $\Sigma\otimes\cB(S)$, its projection $\pr_\Omega(H)$ belongs to $\Sigma$.
\end{proposition}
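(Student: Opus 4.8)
The plan is to combine two facts, one measure-theoretic and one descriptive-set-theoretic. For a family $\mathcal C$ of subsets of a set $Z$ write
\[
  \mathcal A(\mathcal C)=\Big\{\,\bigcup_{\sigma\in\N^{\N}}\ \bigcap_{n\geq 1}C_{\sigma|n}\ :\ C_s\in\mathcal C\ \text{for every finite sequence } s\,\Big\}
\]
for the family obtained by the Souslin operation, where $\sigma|n$ denotes the initial segment of $\sigma$ of length $n$; recall that $\mathcal A(\mathcal C)$ is always closed under countable unions and countable intersections and contains $\mathcal C$. The two ingredients are: (I) since $(\Omega,\Sigma,P)$ is complete, $\Sigma$ is closed under the Souslin operation, i.e.\ $\mathcal A(\Sigma)=\Sigma$ (Lusin--Sierpi\'nski; completeness is essential here); and (II) for $S$ Souslin and $H\in\Sigma\otimes\cB(S)$ the projection $\pr_\Omega(H)$ lies in $\mathcal A(\Sigma)$. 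Granting (I) and (II) one gets $\pr_\Omega(H)\in\mathcal A(\Sigma)=\Sigma$, which is the claim. As (I) is classical, the real work is proving (II).

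First I would reduce to the case $S=\N^{\N}$ (Baire space). Since $S$ is Souslin there is a continuous surjection from a Polish space onto $S$, and every nonempty Polish space is in turn a continuous image of $\N^{\N}$; composing gives a continuous surjection $g:\N^{\N}\to S$ (the case $S=\emptyset$ being trivial). As $g$ is Borel measurable, $\tilde H=(\mathrm{id}_\Omega\times g)^{-1}(H)$ belongs to $\Sigma\otimes\cB(\N^{\N})$, and surjectivity of $g$ yields $\pr_\Omega(\tilde H)=\pr_\Omega(H)$. Hence it suffices to prove (II) for $S=\N^{\N}$, where the basic clopen cylinders $N_s$ (sequences extending a finite sequence $s$) are closed, form a countable base, and, in the standard metric, the partition $\{N_t:|t|=n\}$ consists of pieces of diameter $\leq 2^{-n}$.

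Next I would represent $H$ by a Souslin scheme over measurable rectangles. Put $\mathcal Q=\{A\times F:A\in\Sigma,\ F\subset S\text{ closed}\}$. One checks that $\mathcal Q$ is stable under finite intersections and that the complement of each member lies in $\mathcal A(\mathcal Q)$: indeed $(A\times F)\comp=(A\comp\times S)\cup(\Omega\times F\comp)$, and $F\comp$ is open, hence (as $S$ is metric) a countable union of closed sets, so $\Omega\times F\comp$ is a countable union of members of $\mathcal Q$. Consequently $\{E:E,\,E\comp\in\mathcal A(\mathcal Q)\}$ is a $\sigma$-algebra containing $\mathcal Q$, whence $\Sigma\otimes\cB(S)=\sigma(\mathcal Q)\subset\mathcal A(\mathcal Q)$. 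Thus $H=\bigcup_{\sigma}\bigcap_{n}(A_{\sigma|n}\times F_{\sigma|n})$ for suitable $A_s\in\Sigma$ and closed $F_s\subset S$.

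Projecting and using that $\pr_\Omega$ commutes with unions gives
\[
  \pr_\Omega(H)=\bigcup_{\sigma\in\N^{\N}}\pr_\Omega\Big(\big({\textstyle\bigcap_{n}A_{\sigma|n}}\big)\times\big({\textstyle\bigcap_{n}F_{\sigma|n}}\big)\Big),
\]
and the projection of a product equals its $\Omega$-factor when the $S$-factor is nonempty and is $\emptyset$ otherwise. The main obstacle is exactly this nonemptiness condition: the union $\bigcup_{\sigma}\bigcap_{n}A_{\sigma|n}$ restricted to those branches $\sigma$ with $\bigcap_{n}F_{\sigma|n}\neq\emptyset$ is not visibly a Souslin set over $\Sigma$. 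I would remove the defect by regularizing: replacing $A_s,F_s$ by the decreasing partial intersections along each branch (still in $\mathcal Q$) and then interleaving, at level $n$, the $S$-factor with the cylinder partition $\{N_t:|t|=n\}$. This refines the $S$-sides to closed sets of diameter $\to 0$ that are nested along branches, without altering the represented set and without leaving $\Sigma$ on the $\Omega$-sides (which are merely intersected with $\Omega$). For such a regular scheme, completeness of $\N^{\N}$ (Cantor's intersection theorem) gives $\bigcap_{n}F'_{\sigma|n}\neq\emptyset$ if and only if $F'_{\sigma|n}\neq\emptyset$ for every $n$. One then absorbs the condition into the $\Omega$-sides by setting $\hat A_s=A'_s$ when $F'_s\neq\emptyset$ and $\hat A_s=\emptyset$ otherwise; since a single empty level collapses the whole branch,
\[
  \pr_\Omega(H)=\bigcup_{\sigma\in\N^{\N}}\bigcap_{n\geq 1}\hat A_{\sigma|n}=\mathcal A(\{\hat A_s\})\in\mathcal A(\Sigma),
\]
all $\hat A_s$ lying in $\Sigma$. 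This proves (II), and together with (I) the proposition follows. The delicate point is the regularization forcing the diameters to vanish, so that nonemptiness of the closed factors becomes detectable at finite stages.
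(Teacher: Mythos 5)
The paper does not actually prove this proposition: it is imported verbatim as the ``projection theorem'' from Castaing--Valadier \cite[Theorem III.23]{castaing}, so there is no in-paper argument to compare yours against. What you have written is, in outline, the classical proof that lives behind that citation (it is essentially the Dellacherie--Meyer/Castaing--Valadier argument): reduce to $S=\N^{\N}$ via a continuous surjection, show that $\Sigma\otimes\cB(S)$ is contained in the result of the Souslin operation applied to rectangles $A\times F$ with $F$ closed, regularize the scheme so that the closed $S$-factors are nested with diameters tending to zero (so that, by Cantor's intersection theorem, non-emptiness of $\bigcap_n F'_{\sigma|n}$ is detected level by level), absorb the non-emptiness condition into the $\Omega$-factors, and finish with the Lusin--Sierpi\'nski theorem that a complete $\sigma$-algebra is closed under the Souslin operation. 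All the essential ideas are present and correctly deployed; the one place where you are genuinely terse is the ``interleaving'' step, which requires either an explicit bijection between $\N^{\N}\times\N^{\N}$ and $\N^{\N}$ compatible with finite initial segments or an appeal to the idempotence $\mathcal A(\mathcal A(\mathcal C))=\mathcal A(\mathcal C)$ of the Souslin operation, and one should also record that the refined scheme may be taken monotone along branches so that Cantor's theorem applies; both points are standard and easily supplied. So your proposal is a correct self-contained substitute for the citation, at the cost of invoking two nontrivial classical facts (closure of complete $\sigma$-algebras under the Souslin operation, and the representation of Polish spaces as continuous images of Baire space) that the paper deliberately avoids by quoting the result.
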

\begin{proof}\textbf{of Theorem{thm:fund}.}
By Lemma \;\ref{GX}, the graph of $X$ is measurable, i.\,e., belongs to $\Sigma\otimes\cB(E')$.
Let $B\in\cB(E')$. Then
\begin{equation*}
\Graph\left(X\right)\cap\left(\Omega\times B\right)\in\Sigma\otimes\cB(E').
\end{equation*}
Observe that
\begin{equation*}
X^-\left(B\right) = \pr_\Omega\big(\Graph(X)\cap(\Omega\times B)\big).
\end{equation*}
The projection theorem implies that $X^-\left(B\right)\in\Sigma$.
\end{proof}

\section{Application to the Wave Equation}
\label{sec:GenSol}
We address generalized solutions in $\cD'(\R^{d+1})$ to the half-space problem for the wave equation
\begin{equation}
\label{wave}
\left(\partial_t^2-c^2\Delta\right)u_c = g,\quad u_c\big|_{t < 0} = 0
\end{equation}
In other words, the support $\supp u_c$ of the solution should be a subset of $\R^d\times [0,\infty)$. Here $g\in\cD'(\R^{d+1})$ with
$\supp g \subset\R^d\times [0,\infty)$ and $c$ is a positive constant.

\subsection{The Deterministic Case}
\label{subsec:det}
It is wellknown \cite{Chazarain1982} that the problem\;\eqref{wave} has a unique solution $u_c \in \cD'(\R^{d+1})$. In particular, for $g = \delta$,
the Dirac measure at zero in $\R^{d+1}$, one obtains the unique \emph{fundamental solution} $F_c$ with support in the forward light cone
$\{(x,t)\in \R^{d+1}:t\geq 0, |x|\leq ct\}$. If $\supp g \subset\R^d\times [0,\infty)$, the convolution of $F_c$ and $g$ exists, and the solution $u_c$
to \eqref{wave} is given by $u_c = F_c\ast g$. Explicit formulas for the fundamental solution can be found, e.\,g., in \cite{TrevesBasic}. In space dimensions $d=1$ and $d=2$, it is a function, in space dimension $d=3$, it is a measure, and in space dimensions $d\geq 4$, it is a distribution of higher order.

Recall that the Dirac measure in $\R^{d+1}$ is homogeneous of degree $-1$ with respect to $t$, as follows from the calculation
\begin{align*}
   \langle \delta(x,ct),\varphi(x,t)\rangle &= \langle \delta(x,t), \tfrac1c\varphi(x,\tfrac{t}{c})\rangle\\
       &= \tfrac1c\varphi (0,0) = \tfrac1c\langle \delta(x,t),\varphi(x,t)\rangle
\end{align*}
for test functions $\varphi\in\cD(\R^{d+1})$. Here we have used the definition of homothety for distributions. (We are adhering to the common abusive notation exhibiting the variables $(x,t)$ in the duality bracket.) A similar calculation shows that
\[
   F_c(x,t) = \tfrac1c F_1(x,ct).
\]
\begin{lemma}\label{lem:continuity}
The maps $(0,\infty)\to \cD'(\R^{d+1})$, $c\to F_c$ and $c\to u_c$ are continuous.
\end{lemma}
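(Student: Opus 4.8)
The plan is to prove both claims by testing against an arbitrary $\varphi\in\cD(\R^{d+1})$ and exploiting the scaling relation $F_c(x,t)=\tfrac1c F_1(x,ct)$ established just above. Since the domain $(0,\infty)$ is metrizable, hence first countable, continuity into the weak-$\ast$ space $\cD'(\R^{d+1})$ is equivalent to sequential continuity; so I would fix $c_0\in(0,\infty)$, take $c_n\to c_0$, and show $\langle F_{c_n},\varphi\rangle\to\langle F_{c_0},\varphi\rangle$ and, afterwards, the analogous statement for $u_c$.

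For $c\to F_c$: applying the homothety relation together with a change of variables in the $t$-slot, one rewrites $\langle F_c,\varphi\rangle=c^{-2}\langle F_1,\varphi_c\rangle$, where $\varphi_c(x,t):=\varphi(x,t/c)$. The scalar $c^{-2}$ is plainly continuous, so the whole statement reduces to the claim that $c\mapsto\varphi_c$ is continuous from $(0,\infty)$ into $\cD(\R^{d+1})$; continuity of the fixed functional $F_1$ then finishes the argument. To verify $\varphi_{c_n}\to\varphi_{c_0}$ in $\cD(\R^{d+1})$ I would check the two defining requirements. First, for $c$ in a bounded neighbourhood of $c_0$ the supports $\supp\varphi_c$ remain inside one fixed compact set, since the $t$-extent scales by the bounded factor $c$. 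Second, each derivative $\partial^\alpha\varphi_{c_n}$ converges uniformly to $\partial^\alpha\varphi_{c_0}$: differentiation in $t$ produces only the bounded factors $c_n^{-k}\to c_0^{-k}$, while uniform continuity of the derivatives of $\varphi$ controls the shifted arguments $t/c_n\to t/c_0$. This part is routine but is the technical core of the first claim.

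For $c\to u_c=F_c\ast g$ I would reduce to the statement just proved. Using the adjoint convolution identity $\langle F_c\ast g,\varphi\rangle=\langle F_c,\tilde g\ast\varphi\rangle$ with $\tilde g(z)=g(-z)$, the function $\tilde g\ast\varphi$ is smooth and \emph{independent of $c$}, and $\supp(\tilde g\ast\varphi)\subset\big(\R^d\times(-\infty,0]\big)+\supp\varphi\subset\{t\le T\}$ for some $T$. Although $\tilde g\ast\varphi$ is not compactly supported, its overlap with $\supp F_c\subset\{t\ge 0,\ |x|\le ct\}$ is contained, for all $c$ in a bounded interval $[a,b]$ around $c_0$, in the single fixed compact set $\{0\le t\le T,\ |x|\le bT\}$. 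Choosing a cutoff $\chi\in\cD(\R^{d+1})$ equal to $1$ there, one obtains $\langle u_c,\varphi\rangle=\langle F_c,\psi\rangle$ with the \emph{fixed} test function $\psi=\chi\cdot(\tilde g\ast\varphi)$, valid for every $c\in[a,b]$. Continuity of $c\mapsto\langle F_c,\psi\rangle$ at $c_0$ is exactly the first claim, and since every $c_0$ admits such a bounded neighbourhood, continuity of $c\to u_c$ follows.

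I expect the main obstacle to be the rigorous handling of the convolution $F_c\ast g$, since neither factor has compact support: one must justify the adjoint identity and, above all, confirm that the overlap of the forward light cone with $\supp(\tilde g\ast\varphi)$ is compact and can be trapped in a single compact set uniformly in $c$ near $c_0$. Once this cone-versus-support bookkeeping is in place, the statement for $u_c$ collapses onto the already-established continuity of $c\to F_c$.
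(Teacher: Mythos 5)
Your proof is correct, and its first half coincides with the paper's argument: both reduce continuity of $c\to F_c$ to continuity of $c\mapsto \tfrac{1}{c^2}\varphi(x,t/c)$ into $\cD(\R^{d+1})$ via the homothety relation $F_c(x,t)=\tfrac1c F_1(x,ct)$; you simply verify in more detail what the paper asserts in one line (fixed compact support for $c$ in a bounded interval, uniform convergence of all derivatives). The second half takes a genuinely different route. The paper places all the $F_c$, for $c$ in a suitable range, into a single space $\cD'_\Gamma(\R^{d+1})$ of distributions supported in a fixed cone and then quotes the theorem (Vladimirov) that convolution with a fixed $g$ supported in the upper half-space is continuous from $\cD'_\Gamma$ to $\cD'$. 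You instead prove the needed special case by hand: the adjoint identity $\langle F_c\ast g,\varphi\rangle=\langle F_c,\tilde g\ast\varphi\rangle$, the observation that $\supp(\tilde g\ast\varphi)\subset\{t\le T\}$ meets each forward cone $\supp F_c$ in a compact set that can be trapped in one fixed compact set uniformly for $c$ in a bounded interval, and a single cutoff producing a fixed test function $\psi$ with $\langle u_c,\varphi\rangle=\langle F_c,\psi\rangle$, which hands the problem back to the first claim. This is precisely the support bookkeeping that underlies the quoted convolution theorem, so your argument is self-contained and more elementary, at the cost of having to justify the adjoint identity for the distributional convolution (which you correctly flag as the technical point); the paper's version is shorter and records slightly more information, namely continuity into $\cD'_\Gamma$, but leans on an external reference.
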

\begin{proof}
For the first assertion, observe that the map $(0,\infty)\to \cD(\R^{d+1})$, $c\to\frac1{c^2}\varphi(x,\frac{t}{c})$ is continuos. Thus
\[
  c\to \langle \tfrac1c F_1(x,ct),\varphi(x,t)\rangle = \langle F_1(x,t), \tfrac1{c^2}\varphi(x,\tfrac{t}{c})\rangle
\]
is continuous as well. Actually, on can say more: If $c_0 > 0$, the map $[c_0,\infty) \to \cD'_\Gamma(\R^{d+1})$, $c\to F_c$, where
$\cD'_\Gamma(\R^{d+1})$ denotes the space of distributions with support in the cone $\Gamma = \{(x,t)\in \R^{d+1}:t\geq 0, |x|\leq c_0t\}$, is continuous. To prove the second assertions, it suffices to recall that for fixed $g\in \cD'(\R^{d+1})$ with support in the upper half-space, convolution $f\to f\ast g$ is a continuous map from
$\cD'_\Gamma(\R^{d+1})$ to $\cD'(\R^{d+1})$, see e.\,g. \cite[Section 4]{Vladimirov} or the arguments in \cite[Section 4.1]{Wurzer}.
\end{proof}

\subsection{The Stochastic Case}
\label{subsec:stoch}
Let $\left(\Omega,\Sigma,P\right)$ be a probability space and
$d\in\N$. A \emph{generalized stochastic process} $X$ is a weakly
measurable map
\begin{equation*}
  X:\Omega\to\cD'(\R^{d+1}),
\end{equation*}
i.\,e. $\omega\to\left<X(\omega),\varphi\right>$ is measurable
for all $\varphi\in\cD'(\R^{d+1})$.  An important example is
\emph{Gaussian space-time white noise}, denoted by $\dot W$, with support in
$D=\R^d\times\left[0,\infty\right)$. Let $\cS(D) = \cS(\R^{d+1})|D$, where $\cS(\R^{d+1})$ is the Schwartz space of rapidly decreasing smooth functions.
Its continuous dual $\Omega=\cS'(D)$, equipped with the Borel $\sigma$-algebra $\Sigma = \cB(\cS'(D))$ with respect to the weak-$\ast$ topology, is a measurable space.
By the \emph{Bochner-Minlos theorem} \cite[Section~3.2]{hida}, \cite[Theorem~2.1.1]{holden}.
there exists a unique probability measure $P$ on $\cS'(D)$ such that
  \begin{equation}\label{eq:wnp}
    \int_{\cS'(D)}
    {\rm e}^{{\rm i}\left\langle\omega,\varphi\right\rangle}dP(\omega)={\rm e}^{-\frac{1}{2}\left\Vert\varphi\right\Vert^2_{L^2(D)}}
  \end{equation}
for all $\varphi\in \cS(D)$. The completion of $(\Omega,\Sigma,P)$ is called the \emph{white noise probability space}. Space-time white noise $\dot W$ with support in $D$ is the generalized stochastic process
\begin{equation}
\label{noise}
  \begin{aligned}
    \dot W:\cS'(D)&\to\cD'(\R^{d+1}),\\
    \left\langle\dot
      W(\omega),\varphi\right\rangle&=\left\langle\omega,\varphi\vert D\right\rangle,
  \end{aligned}
\end{equation}
for all $\varphi\in\cD(\R^{d+1})$. By the defining equation \eqref{eq:wnp}, it is a Gaussian process. Its probabilistic properties are characterized by the It\^{o} isometry
\[
   {\rm E}\langle \dot W,\varphi\rangle = 0,\quad {\rm E}(\langle \dot W,\varphi\rangle)^2 = \|\varphi\|^2_{L^2(D)}.
\]
\begin{proposition}
\label{usswe}
There is an (almost surely) unique generalized stochastic process $u_c$ such that
\begin{align}
      \label{stwe1}\left(\partial_t^2-\Delta\right)u_c(\omega)&=\dot W(\omega),\\
      \label{stwe2}\supp u(\omega)&\subset\R^d\times[0,\infty),
\end{align}
in $\cD'\left(\R^{d+1}\right)$ for (almost) all $\omega\in\Omega$. Further $u_c(\omega)$ depends continuously on $c\in (0,\infty)$.
\end{proposition}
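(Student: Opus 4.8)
The plan is to solve the equation pathwise and then read off the probabilistic properties. For fixed $\omega\in\Omega$, the white noise sample $\dot W(\omega)$ is a distribution in $\cD'(\R^{d+1})$ whose support lies in $D=\R^d\times[0,\infty)$, directly from the definition \eqref{noise}. The deterministic theory of Subsection~\ref{subsec:det} then applies verbatim with $g=\dot W(\omega)$: the half-space problem \eqref{wave} has the unique solution
\[
   u_c(\omega)=F_c\ast\dot W(\omega)\in\cD'(\R^{d+1}),
\]
with support contained in $\R^d\times[0,\infty)$. This simultaneously establishes existence, the equation \eqref{stwe1}, the support condition \eqref{stwe2}, and—through the deterministic uniqueness of solutions with support in the upper half-space—almost sure (in fact everywhere) uniqueness, since the support hypothesis on $\dot W(\omega)$ holds for every $\omega$.

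Next I would verify that $u_c$ is a generalized stochastic process, i.\,e.\ that $\omega\to\langle u_c(\omega),\varphi\rangle$ is measurable for each $\varphi\in\cD(\R^{d+1})$. The tool is the convolution duality $\langle F_c\ast\dot W(\omega),\varphi\rangle=\langle\dot W(\omega),\check F_c\ast\varphi\rangle$, where $\check F_c(x,t)=F_c(-x,-t)$ is the reflection, so that $\supp\check F_c$ is the backward light cone. The function $\check F_c\ast\varphi$ is smooth, and although it is not globally compactly supported, its restriction to $D$ is: writing a point of $\supp(\check F_c\ast\varphi)$ as a sum of a point of the backward cone and a point of $\supp\varphi\subset\{|x|\le R,\,|t|\le R\}$ and imposing $t\ge 0$ confines both coordinates to a bounded region. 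Hence $(\check F_c\ast\varphi)|_D\in\cS(D)$, and by \eqref{noise},
\[
   \langle u_c(\omega),\varphi\rangle=\langle\omega,(\check F_c\ast\varphi)|_D\rangle .
\]
Measurability then follows because $\omega\to\langle\omega,\chi\rangle$ is Borel measurable for each fixed $\chi\in\cS(D)$, by the very definition of the weak-$\ast$ Borel $\sigma$-algebra $\Sigma$ on $\cS'(D)$.

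The continuous dependence on $c$ is immediate from Lemma~\ref{lem:continuity}: for fixed $\omega$, the distribution $g=\dot W(\omega)$ has support in the upper half-space, and the lemma yields that $c\to F_c\ast g=u_c(\omega)$ is continuous from $(0,\infty)$ into $\cD'(\R^{d+1})$. Thus all four assertions reduce to the deterministic results already in hand together with the construction of the white noise probability space.

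I expect the measurability step to be the main obstacle—not the abstract duality, but the bookkeeping that legitimizes it. Since $\dot W(\omega)$ is defined only through its restriction to $D$ while $\check F_c\ast\varphi$ is genuinely non-compactly supported on $\R^{d+1}$, one must invoke \emph{both} support conditions, the forward cone of $F_c$ and the upper half-space support of the noise, to confirm that the transferred test function lands in $\cS(D)$ and that every pairing is well defined. Once this is settled, everything else is routine.
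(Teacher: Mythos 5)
Your proposal is correct and follows essentially the same route as the paper: pathwise existence, uniqueness and the support condition come from the deterministic theory of Subsection~\ref{subsec:det} via $u_c(\omega)=F_c\ast\dot W(\omega)$, and continuity in $c$ is exactly Lemma~\ref{lem:continuity}. The only point of divergence is that where the paper disposes of measurability in one line---$u_c$ is the composition of the measurable map $\omega\to\dot W(\omega)$ with the continuous map $g\to F_c\ast g$---you unpack that continuity explicitly by transposing the convolution onto the test function, $\langle u_c(\omega),\varphi\rangle=\langle\omega,(\check F_c\ast\varphi)\vert_D\rangle$, and verifying the support bookkeeping that puts $(\check F_c\ast\varphi)\vert_D$ into $\cS(D)$; this is a correct, somewhat more self-contained rendering of the same argument.
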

\begin{proof}
Pathwise existence and uniqueness follows from the deterministic theory outlined in Subsection\;\ref{subsec:det}. The solution is given by
\begin{equation*}
  u_c(\omega)=F_c\ast\dot W(\omega)
\end{equation*}
and depends continuously on $c$ by Lemma\;\ref{lem:continuity}.
Measurability at fixed $c$ follows from the fact that $u_c$ is obtained as a composition of a measurable and a continuous map:
\[
   \omega \to \dot{W}(\omega)\to F_c\ast\dot W(\omega),
\]
see again Lemma\;\ref{lem:continuity}.
\end{proof}
Modelling the uncertainty in the propagation speed by a compact interval $[\underline c, \overline c]$ with $\underline c > 0$, we are now in the position to define the solution to the stochastic wave equation as a multifunction
\begin{align}\label{shx}
    X:\Omega &\to\cP\big(\mathcal D'(\mathbb R^{d+1})\big)\nonumber\\
    X(\omega)&=\left\{u_c\left(\omega\right):\underline c\leq c\leq\overline c\right\}.
\end{align}
As continuous images of a compact interval, the values $X(\omega)$ are sequentially compact and, in particular, closed.
\begin{lemma}\label{lem:Effros}
The multifunction \eqref{shx} admits a Castaing representation and is Effros measurable.
\end{lemma}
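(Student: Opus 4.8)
The plan is to exhibit an explicit Castaing representation built from a countable dense set of propagation speeds, and then to invoke Remark~\ref{rem1}(b) to pass from the existence of such a representation to Effros measurability. Concretely, I would fix a countable dense subset $\{c_n : n\in\N\}$ of the parameter interval $[\underline c,\overline c]$, for instance $\mathbb{Q}\cap[\underline c,\overline c]$, and set $\xi_n(\omega)=u_{c_n}(\omega)$. Each $\xi_n$ is a selection of $X$ by construction, since $c_n\in[\underline c,\overline c]$ forces $\xi_n(\omega)\in X(\omega)$ for (almost) all $\omega$.

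First I would check that the $\xi_n$ are genuine $\cD'(\R^{d+1})$-valued random variables. Measurability of $\omega\to u_{c_n}(\omega)$ at the fixed speed $c_n$ is exactly the content of Proposition~\ref{usswe}: $u_{c_n}$ is a generalized stochastic process, hence weakly measurable. Since $\cD'(\R^{d+1})$ is the dual of the separable space $\cD(\R^{d+1})$ and, under the weak-$\ast$ topology, is separable and Souslin, its Borel $\sigma$-algebra is generated by the evaluation maps $x\to\langle x,\varphi\rangle$; thus weak measurability coincides with Borel measurability and each $\xi_n$ is a bona fide random variable.

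Next I would verify the Castaing identity $X(\omega)=\clos\{\xi_n(\omega):n\geq 1\}$ for (almost) every $\omega$. The inclusion $\clos\{\xi_n(\omega)\}\subset X(\omega)$ is immediate: each $\xi_n(\omega)$ lies in $X(\omega)$, and $X(\omega)$ is closed, being a continuous (hence compact, hence closed) image of $[\underline c,\overline c]$. For the reverse inclusion, let $c\in[\underline c,\overline c]$ and pick $c_{n_k}\to c$ from the dense set. By the continuous dependence of $u_c(\omega)$ on $c$ established in Proposition~\ref{usswe}, $\xi_{n_k}(\omega)=u_{c_{n_k}}(\omega)\to u_c(\omega)$ in the weak-$\ast$ topology, so $u_c(\omega)\in\clos\{\xi_n(\omega)\}$. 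Hence $X(\omega)\subset\clos\{\xi_n(\omega)\}$ and the two sets coincide. With the Castaing representation in hand, Remark~\ref{rem1}(b) yields Effros measurability of $X$ at once.

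The two inclusions in the Castaing identity are routine; the one point that deserves care — and which I regard as the main obstacle — is the measurability of the individual selections. One must make sure that the weak measurability furnished by the notion of a generalized stochastic process is actually Borel measurability for the weak-$\ast$ topology, which hinges on the coincidence of the weak-$\ast$ Borel $\sigma$-algebra with the cylinder $\sigma$-algebra. This is precisely the separability-and-Souslin structure that underlies Theorem~\ref{thm:fund}, so it is available in the present setting, but it is the step where the topology of $\cD'(\R^{d+1})$ genuinely enters.
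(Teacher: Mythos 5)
Your proof is correct and follows essentially the same route as the paper: choose a countable dense set of speeds $c_n$, use the continuity of $c\to u_c(\omega)$ from Proposition~\ref{usswe} to get $X(\omega)=\clos\{u_{c_n}(\omega)\}$, and apply Remark~\ref{rem1}(b). Your additional discussion of why weak measurability of the selections suffices for Borel measurability is a detail the paper leaves implicit, but it does not change the argument.
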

\begin{proof}
Let $\left(c_i\right)_{i\in\mathbb N}$ be a
sequence such that
$\clos{\left\{c_i:i\in\mathbb N\right\}}=\left[\underline c,\overline c\right]$. Then, because
of the continuity of the function $c\to u_c(\omega)$,
\begin{equation*}
  X(\omega)=\clos\big\{u_{c_i}(\omega):i\in\mathbb N\big\}
\end{equation*}
in $\mathcal D'(\mathbb R^{d+1})$, for (almost) all $\omega$.
By the measurability of the $u_{c_i}$ we conclude that
$(u_{c_i})_{i\in\N}$ is a Castaing representation of $X$. By Remark\;\ref{rem1}(b), $X$ is Effros measurable.
\end{proof}
Here is the main result of this section.
\begin{theorem}\label{thm:rs}
The multifunction \eqref{shx} is a random set.
\end{theorem}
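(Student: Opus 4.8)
The plan is to obtain the statement directly from the generalized Fundamental Measurability Theorem~\ref{thm:fund}, applied with $E=\cD(\R^{d+1})$ and hence $E'=\cD'(\R^{d+1})$ carrying the weak-$\ast$ topology. Two of the hypotheses of that theorem are already at hand for the multifunction $X$ of \eqref{shx}: by Lemma~\ref{lem:Effros} it is Effros measurable, and, as observed right after \eqref{shx}, each value $X(\omega)$ is the continuous image of the compact interval $[\underline c,\overline c]$ and is therefore sequentially compact. What remains is to verify the three topological conditions on the pair $(E,E')$ required by Theorem~\ref{thm:fund}, namely separability of $E$, separability of $E'$, and the Souslin property of $E'$.

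Separability of $E=\cD(\R^{d+1})$ is classical. For separability of $E'=\cD'(\R^{d+1})$ in the weak-$\ast$ topology I would show that any countable set $\{\varphi_k\}$ dense in $\cD(\R^{d+1})$ is already weak-$\ast$ dense in $\cD'(\R^{d+1})$. Indeed, the canonical embedding $\iota:\cD(\R^{d+1})\hookrightarrow(\cD'(\R^{d+1}),w^\ast)$ is continuous, so $\iota(\cD(\R^{d+1}))=\iota(\clos\{\varphi_k\})\subset\clos\{\iota(\varphi_k)\}$, whence $\{\iota(\varphi_k)\}$ is weak-$\ast$ dense in $\iota(\cD(\R^{d+1}))$; since test functions are themselves weak-$\ast$ dense in $\cD'(\R^{d+1})$ (by mollification followed by truncation), the countable set $\{\iota(\varphi_k)\}$ is weak-$\ast$ dense in all of $\cD'(\R^{d+1})$.

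The substantive point is the Souslin property of $E'$. Here I would first recall that $\cD'(\R^{d+1})$ equipped with its \emph{strong} topology is a Souslin space: writing $\cD(\R^{d+1})$ as the strict inductive limit over a compact exhaustion $\cD_{K_n}$, each strong dual $\cD_{K_n}'$ is the strong dual of a separable Fréchet space and hence Souslin, and the strong dual $\cD'(\R^{d+1})=\varprojlim\cD_{K_n}'$ is a closed subspace of the countable product $\prod_n\cD_{K_n}'$, so it is Souslin by the permanence of this property under countable products and closed subspaces. Since the weak-$\ast$ topology is coarser than the strong topology and both are Hausdorff, the identity map from $\cD'(\R^{d+1})$ with its strong topology onto $(\cD'(\R^{d+1}),w^\ast)$ is continuous and surjective, and a continuous image of a Souslin space is Souslin; hence $(\cD'(\R^{d+1}),w^\ast)$ is Souslin. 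With all three topological conditions verified, Theorem~\ref{thm:fund} applies and yields that $X$ is Borel measurable, i.\,e.\ a random set. I expect the Souslin property to be the main obstacle, since—unlike the separability statements—it cannot be quoted in one line but rests on the inductive/projective-limit structure of $\cD$ and $\cD'$ together with the stability of the Souslin property; the detailed verification of these facts is carried out in \cite{Wurzer}, and the underlying structure of $\cD$ and $\cD'$ may be found in \cite{TrevesBasic, hor}.
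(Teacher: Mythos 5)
Your proof is correct and follows the same route as the paper: apply Theorem~\ref{thm:fund} with $E=\cD(\R^{d+1})$, $E'=\cD'(\R^{d+1})$, invoking Lemma~\ref{lem:Effros} for Effros measurability and the sequential compactness of the values $X(\omega)$. The only difference is that the paper disposes of the topological hypotheses in one line by citing \cite[Appendix]{treves} for the separability and Souslin properties of $\cD(\R^{d+1})$ and $\cD'(\R^{d+1})$, whereas you sketch the (correct) standard verifications of these facts.
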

\begin{proof}
We wish to apply Theorem\;\ref{thm:fund} with $E = \cD(\R^{n+1})$, $E' = \cD'(\R^{n+1})$. By
\cite[Appendix]{treves}, both $\cD(\R^{n+1})$ and $\cD'(\R^{n+1})$ are Souslin spaces and, in particular, separable. The multifunction
$X$ is Effros measurable by Lemma\;\ref{lem:Effros} and has sequentially compact values. The hypotheses of Theorem\;\ref{thm:fund} are satisfied.
\end{proof}

\section{The One-Dimensional Wave Equation}
\label{sec:1dwave}

In one space dimension, the fundamental solution is given by
\[
   F_c(x,t) = \tfrac1{2c}H(t)H(ct-|x|)
\]
where H denotes the Heaviside function. The convolution of $F_c$ with a function $g$ with support in the upper half-plane results
in d'Alembert's formula
\[
   u_c(x,t) = \tfrac1{2c}\int_0^t\int_{x-cs}^{x+cs}g(y,s) dyds
\]
for the solution of the wave equation \eqref{wave}. This formula can be rewritten as follows. Denote by $\chi_{(x,t,c)}$ the indicator function of the backward triangle
$\{(y,s): 0\leq s\leq t, x-cs \leq y \leq x +cs\}$ with vertex at $(x,t)$. Then $u_c(x,t) = \langle g, \frac1{2c}\chi_{(x,t,c)}\rangle$.

\subsection{Random Field Solution in One Dimension}
\label{subsec:rf1d}

The solution to the one-dimensional ($d=1)$ stochastic wave equation \eqref{stwe1}, \eqref{stwe2} is known \cite{Walsh} to be a scaled Brownian sheet, transformed to the axis
$x = \pm ct$. Actually, by the It\^{o} isometry, the action of $\dot W$ on test functions can be extended to functions in $L^2(\R^2)$. Thus the solution to the one-dimensional stochastic wave equation can be written as
\[
   u_c(x,t) = \langle\dot W, \tfrac1{2c}\chi_{(x,t,c)}\rangle = \tfrac1{2c}\int_0^t\int_{x-cs}^{x+cs} dW(y,s).
\]
\begin{lemma}\label{lem:cov}
Let $(x,t) \in \R\times(0,\infty)$, $c_1, c_2 > 0$. Then the covariance between $u_{c_1}(x,t)$ and $u_{c_2}(x,t)$ is given by
\[
  {\rm E}(u_{c_1}(x,t)u_{c_2}(x,t)) = \frac{t^2}{4}\left(\frac{1}{c_1}\wedge\frac{1}{c_2}\right).
\]
\end{lemma}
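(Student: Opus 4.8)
The plan is to reduce the covariance to an elementary area computation through the It\^{o} isometry. First I would pass to the bilinear form of the isometry: polarizing the identity ${\rm E}(\langle\dot W,\varphi\rangle)^2=\|\varphi\|_{L^2(D)}^2$ gives
\[
  {\rm E}\big(\langle\dot W,f\rangle\langle\dot W,g\rangle\big)=\langle f,g\rangle_{L^2(D)},
\]
valid first for $f,g\in\cS(D)$ and then, by density of the test functions in $L^2$ and continuity of both sides, for all square-integrable kernels supported in $D$. Each kernel $\tfrac1{2c}\chi_{(x,t,c)}$ is a bounded, compactly supported function and hence lies in $L^2(\R^2)$, so this applies with $f=\tfrac1{2c_1}\chi_{(x,t,c_1)}$ and $g=\tfrac1{2c_2}\chi_{(x,t,c_2)}$.

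Next I would turn the resulting inner product into a geometric quantity. Writing $T_c$ for the triangle that supports $\chi_{(x,t,c)}$ and $|\cdot|$ for the two-dimensional Lebesgue measure,
\[
  {\rm E}\big(u_{c_1}(x,t)\,u_{c_2}(x,t)\big)
    =\frac1{4c_1c_2}\int_{\R^2}\chi_{(x,t,c_1)}\,\chi_{(x,t,c_2)}\,dy\,ds
    =\frac1{4c_1c_2}\,\big|T_{c_1}\cap T_{c_2}\big|.
\]
The decisive observation is that these triangles are nested in the parameter $c$: at each fixed height $s$ the horizontal slice of $T_c$ is an interval centered at $x$ whose length grows with $c$, so $c_1\le c_2$ forces $T_{c_1}\subset T_{c_2}$ and therefore $T_{c_1}\cap T_{c_2}=T_{c_1\wedge c_2}$. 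Integrating the slice lengths yields $|T_c|=ct^2$, whence $|T_{c_1}\cap T_{c_2}|=(c_1\wedge c_2)\,t^2$.

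Substituting and simplifying with $\dfrac{c_1\wedge c_2}{c_1c_2}=\dfrac1{c_1\vee c_2}=\dfrac1{c_1}\wedge\dfrac1{c_2}$ produces the asserted value $\tfrac{t^2}{4}\big(\tfrac1{c_1}\wedge\tfrac1{c_2}\big)$. I expect no serious obstacle in the geometry, which is elementary once nestedness is noticed; the only points requiring care are the passage from the scalar It\^{o} isometry to its polarized bilinear version and its extension from $\cS(D)$ to the $L^2$ indicator kernels, both of which rest on a density argument and the continuity of the $L^2$ inner product.
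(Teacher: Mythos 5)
Your proposal is correct and follows essentially the same route as the paper: apply the (polarized) It\^{o} isometry to reduce the covariance to $\tfrac{1}{4c_1c_2}$ times the area of the intersection of the two backward triangles, observe that the triangles are nested so the intersection is the smaller one with area $(c_1\wedge c_2)t^2$, and simplify. The paper states these steps more tersely, while you spell out the polarization and $L^2$-extension of the isometry, but the argument is the same.
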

\begin{proof}
Again by the It\^{o} isometry,
\begin{align*}
   & {\rm E}(u_{c_1}(x,t)u_{c_2}(x,t))\\& = \tfrac{1}{4c_1c_2}\int_0^\infty\int_{-\infty}^\infty \chi_{(x,t,c_1)}(y,s)\chi_{(x,t,c_2)}(y,s) dyds.
\end{align*}
The latter integral equals the area of the smaller triangle, thus it is $t^2(c_1\wedge c_2)$. After division by $4c_1c_2$, the smaller constant cancels, and this results in the desired formula.
\end{proof}
In particular, $u_c(x,t)$ vanishes as $c\to\infty$ in the mean square sense, i.\,e.,
\[
   \lim_{c\to\infty}{\rm E}(u_{c}^2(x,t)) = 0.
\]
\begin{proposition}
  \label{wie}
  Let $t>0$ and $x\in\R$ be fixed. The stochastic process
  $(v_r)_{r\geq0}$, defined by
  \begin{equation}
    \label{vr}
    v_r=\frac{2}{t}u_{\frac{1}{r}}(t,x), \quad r >0
  \end{equation}
  with $v_0 = 0$ is a Brownian motion.
\end{proposition}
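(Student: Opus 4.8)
The plan is to verify the three defining properties of a Brownian motion for the process $(v_r)_{r\ge 0}$: that it is a centered Gaussian process, that its covariance is ${\rm E}(v_{r_1}v_{r_2}) = r_1\wedge r_2$, and that it has almost surely continuous paths with $v_0 = 0$. Recall from \eqref{vr} and the representation in Subsection~\ref{subsec:rf1d} that, since $\tfrac1{2c} = \tfrac r2$ for $c = 1/r$,
\[
   v_r = \tfrac2t\,u_{1/r}(x,t) = \tfrac rt\,\langle\dot W,\chi_{(x,t,1/r)}\rangle,
\]
where the action of $\dot W$ on the $L^2$ indicator $\chi_{(x,t,1/r)}$ is understood via the It\^o isometry.

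First I would establish Gaussianity. For any finite collection of parameters $r_1,\dots,r_n > 0$ and reals $a_1,\dots,a_n$, the linear combination $\sum_j a_j v_{r_j} = \langle\dot W,\psi\rangle$ with $\psi = \sum_j a_j\tfrac{r_j}t\,\chi_{(x,t,1/r_j)} \in L^2(\R^2)$. By the extension of $\dot W$ to $L^2(\R^2)$ and the Gaussian character of the white noise probability measure \eqref{eq:wnp}, this is a centered real Gaussian variable. Hence $(v_{r_1},\dots,v_{r_n})$ is jointly Gaussian and centered, so $(v_r)_{r>0}$ is a centered Gaussian process, and appending $v_0 = 0$ preserves this.

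Next, the covariance. Substituting $c_i = 1/r_i$ into Lemma~\ref{lem:cov} gives
\[
   {\rm E}(v_{r_1}v_{r_2}) = \tfrac{4}{t^2}\,{\rm E}\big(u_{1/r_1}(x,t)\,u_{1/r_2}(x,t)\big) = \tfrac{4}{t^2}\cdot\tfrac{t^2}{4}\big(r_1\wedge r_2\big) = r_1\wedge r_2,
\]
which is precisely the Brownian covariance; the cases involving $r = 0$ are consistent with $v_0 = 0$. Together with centered Gaussianity, this yields independent, stationary, normally distributed increments, that is, exactly the finite-dimensional distributions of standard Brownian motion.

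Finally, path continuity, which I expect to be the main obstacle. For $r$ ranging over any compact subinterval of $(0,\infty)$, continuity of $r\mapsto v_r(\omega)$ follows for almost every $\omega$ from the continuity of $c\mapsto u_c(x,t)$ (Lemma~\ref{lem:continuity}, Proposition~\ref{usswe}) composed with the continuous map $r\mapsto 1/r$; thus $(v_r)_{r>0}$ already has continuous paths and, by the previous steps, is a Brownian motion on $(0,\infty)$. The delicate point is continuity at $r = 0$, i.e.\ $\lim_{r\to 0^+}v_r = 0$ almost surely, which corresponds to $c = 1/r\to\infty$, where Lemma~\ref{lem:continuity} no longer applies and only the mean-square vanishing ${\rm E}(u_c^2(x,t))\to 0$ is directly available. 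I would handle this by noting that the event $\{\lim_{r\to 0^+,\,r\in\mathbb Q}v_r = 0\}$ depends only on countably many finite-dimensional distributions, which coincide with those of standard Brownian motion and hence assign it probability one; path continuity on $(0,\infty)$ then upgrades the rational limit to the full limit, matching $v_0 = 0$. This establishes almost-sure continuity on $[0,\infty)$ and identifies $(v_r)_{r\ge 0}$ as a Brownian motion.
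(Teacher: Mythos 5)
Your first two steps are exactly the paper's proof: the paper verifies that $v$ is a centered Gaussian process and computes ${\rm E}(v_rv_s)=\tfrac{4}{t^2}{\rm E}\bigl(u_{1/r}(t,x)u_{1/s}(t,x)\bigr)=r\wedge s$ from Lemma~\ref{lem:cov}, and then simply concludes with ``these properties characterize Brownian motion.'' So for the part of the argument the paper actually carries out, you are in full agreement, and your justification of joint Gaussianity via the $L^2$-extension of $\dot W$ is a welcome amplification of the paper's one-word ``clearly.''

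The extra material on path continuity, however, contains a genuine gap. You claim that continuity of $r\mapsto v_r(\omega)$ on compact subintervals of $(0,\infty)$ follows from Lemma~\ref{lem:continuity} composed with $r\mapsto 1/r$. But Lemma~\ref{lem:continuity} asserts continuity of $c\mapsto u_c$ as a map into $\cD'(\R^{d+1})$ with the weak-$\ast$ topology, i.e.\ continuity of $c\mapsto\langle u_c,\varphi\rangle$ for test functions $\varphi$; it says nothing about the pointwise evaluation $c\mapsto u_c(x,t)$ at a fixed $(x,t)$, which is only defined via an $L^2(\Omega)$-limit and hence only up to a $c$-dependent null set. (What the $L^2$-continuity of $c\mapsto\tfrac1{2c}\chi_{(x,t,c)}$ buys you is mean-square continuity, not almost-sure path continuity.) Note that the paper itself, in the proposition immediately following, deduces continuity of $c\mapsto u_c(x,t,\omega)$ \emph{from} the fact that $v$ is a Brownian motion, so invoking it here runs the logic backwards. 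The clean repair is to read the statement at the level of laws, as the paper implicitly does: a centered Gaussian process with covariance $r\wedge s$ has the finite-dimensional distributions of Brownian motion, and Kolmogorov's continuity criterion (applied to ${\rm E}(v_r-v_s)^2=|r-s|$) furnishes a continuous modification, which in particular satisfies $v_r\to 0$ as $r\to0^+$; your observation that the rational-limit event is determined by the finite-dimensional distributions then becomes unnecessary.
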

\begin{proof}
  Clearly $v$ is a Gaussian process with zero expectation.
  The covariance we get from Lemma\;\ref{lem:cov} as
  \begin{equation*}
    {\rm E}\left(v_rv_s\right)=\frac{4}{t^2}{\rm E}\left(u_{\frac{1}{r}}(t,x)u_{\frac{1}{s}}(t,x)\right)=r\wedge s.
  \end{equation*}
These properties characterize Brownian motion.
\end{proof}
\begin{proposition}
  Let $\underline c$ and $\overline c$ be two real numbers such that
  \begin{equation*}
    0<\underline c<\overline c<\infty.
  \end{equation*}
  Then, for fixed $t>0$ and $x\in\R$, the multifunction
\begin{align*}
      X:\Omega&\to\cP(\R),\\
      \omega&\to\left\{u_c(x,t,\omega):\underline c\leq c\leq \overline c\right\},
\end{align*}
is a random set.
\end{proposition}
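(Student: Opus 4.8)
The plan is to invoke the Fundamental Measurability Theorem for Polish spaces \cite{mol}, which applies directly here because $\R$ is Polish; the generalization of Section~\ref{sec:meas} is not needed. Since the white noise probability space is complete and the values $X(\omega)$ are compact---being continuous images of the interval $[\underline c,\overline c]$ under $c\mapsto u_c(x,t,\omega)$---and hence closed, it suffices to exhibit a Castaing representation of $X$. By Remark~\ref{rem1}(b) this also yields Effros measurability, and the Fundamental Measurability Theorem then gives Borel measurability, i.\,e., that $X$ is a random set.

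First I would fix a sequence $(c_i)_{i\in\N}$ with $\clos\{c_i:i\in\N\}=[\underline c,\overline c]$ and set $\xi_i(\omega)=u_{c_i}(x,t,\omega)$. Each $\xi_i$ is a random variable: indeed $u_{c_i}(x,t,\cdot)=\langle\dot W,\tfrac{1}{2c_i}\chi_{(x,t,c_i)}\rangle$ is a centered Gaussian random variable by the It\^o isometry, and in particular measurable.

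The crux is to show that $X(\omega)=\clos\{\xi_i(\omega):i\in\N\}$ for almost all $\omega$, which amounts to the almost sure continuity of the map $c\mapsto u_c(x,t,\omega)$ on $[\underline c,\overline c]$. Here Proposition~\ref{wie} does the decisive work: the process $r\mapsto v_r=\tfrac{2}{t}\,u_{1/r}(x,t,\omega)$ is a Brownian motion, and Brownian motion admits a version with almost surely continuous sample paths (by Kolmogorov's continuity criterion, using ${\rm E}|v_r-v_s|^{2k}=C_k|r-s|^k$, which holds by Gaussianity). Since $c\mapsto 1/c$ is a homeomorphism of $(0,\infty)$, continuity of $r\mapsto v_r$ on the compact subinterval $[1/\overline c,1/\underline c]$ transfers to continuity of $c\mapsto u_c(x,t,\omega)$ on $[\underline c,\overline c]$. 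For such $\omega$, the image of the dense set $\{c_i\}$ is dense in $\{u_c(x,t,\omega):c\in[\underline c,\overline c]\}=X(\omega)$, so $(\xi_i)_{i\in\N}$ is a Castaing representation.

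The main obstacle is precisely this path-continuity step: one must ensure that the version of $u_c$ entering the definition of $X$ has sample paths continuous in $c$ for almost every $\omega$, rather than merely possessing a continuous modification for each fixed $c$. Because the probability space is complete and a Castaing representation is only required to hold almost surely, passing to the continuous modification supplied by Proposition~\ref{wie} is harmless and resolves the issue; the remaining verifications (measurability of the $\xi_i$ and closedness of the values) are then routine.
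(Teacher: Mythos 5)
Your proposal is correct and follows essentially the same route as the paper: identify $c\mapsto u_c(x,t,\omega)$ with the Brownian motion of Proposition~\ref{wie} to get almost sure path continuity, build a Castaing representation from a dense sequence $(c_i)$ in $[\underline c,\overline c]$, and conclude via the classical Fundamental Measurability Theorem for Polish spaces. Your extra remark about passing to the continuous modification is a point the paper leaves implicit, but it does not change the argument.
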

\begin{proof}
  From \eqref{vr} we get
  \begin{equation*}
    u_c(t,x)=\frac{t}{2}v_{\frac{1}{c}}
  \end{equation*}
  almost surely. Since $v$ is a Brownian motion, the function $c\to u_c(x,t,\omega)$ is
  continuous for almost all $\omega$. It follows that the sequence of random variables
  $\left(u_{c_n}(t,x)\right)_{n\in\N}$ is a Castaing representation of
  $X$, where $\left(c_n\right)_{n\in\N}$ is a sequence such that
  $\clos\left\{c_n:n\in\N\right\}=\left[\underline c,\overline c\right]$.
  Applying the Fundamental Measurability Theorem in its classical form for Polish spaces (mentioned in Section\;\ref{sec:meas})
  implies that $X$ is Borel measurable and therefore a random set.
\end{proof}

\subsection{Upper and Lower Probabilities}
\label{subsec:upplow}

In this section, we are going to compute the upper probability $\overline P(B)$ and lower
probability $\underline P(B)$ for any open interval $B=(\underline
b,\overline b)$. By definition, the upper probability equals
\begin{equation}
  \label{upb}
  \overline P(B)=P\left(X\cap B\neq\emptyset\right).
\end{equation}
Applying the theorem of total probability we can write
\begin{align*}
  &P\left(X\cap B\neq\emptyset\right)=P\left(\exists c\in\left[\underline c,\overline c\right]:\underline b<u_c\left(t,x\right)<\overline b\right)\nonumber\\
  &=\int_{-\infty}^{\underline b}P\left(\exists c\in[\underline c,\overline c]:u_c(t,x)>\underline b\big|u_{\overline c}(t,x)=y\right)f(y)dy\nonumber\\
  &+\int_{\underline{b}}^{\overline{b}}\,f(y)dy\\
  &+\int_{\overline b}^\infty P\left(\exists c\in[\underline c,\overline c]:u_c(t,x)<\overline b\big|u_{\overline
      c}(t,x)=y\right)f(y)dy,
\end{align*}
where $f$ is the probability density of the random variable
$u_{\overline c}(t,x)$. Letting $\underline r=1/\overline c$, $\overline r=1/\underline c$,
we get
\begin{align}
  &P\left(X\cap B\neq\emptyset\right)\nonumber \\
  &=\int_{-\infty}^{\frac{2}{t}\underline b}P\left(\exists r\in[\underline r,\overline r]:v_r>\frac{2}{t}\underline b\big|v_{\underline r}=y\right)g(y)dy\nonumber\\
  &+\int_{\frac{2}{t}\underline b}^{\frac{2}{t}\overline b}\,g(y)dy\nonumber\\
  \label{pxcapb}
  &+\int_{\frac{2}{t}\overline b}^\infty P\left(\exists
      r\in[\underline r,\overline r]:v_r<\frac{2}{t}\overline
      b\big|v_{\underline r}=y\right)g(y)dy,
\end{align}
where
\begin{equation*}
  g(y)=\frac{1}{\sqrt{2\pi\underline r}}e^{-\frac{y^2}{2\underline r}}
\end{equation*}
is the probability density of the random variable $v_{\underline r}(t,x)$. It is a Gaussian random variable with variance $\underline
r$. The probabilities in \eqref{pxcapb} we express by first hitting
times of a Brownian motion.

So let $(w_t)_{t\in\left[0,\infty\right)}$ be a standard Brownian motion starting
at $0$ and let $a\in\R$. The first hitting time $\tau(a)$ is defined by
\begin{equation*}
  \tau(a):=\min\left\{t:w_t=a\right\}.
\end{equation*}
Its probability distribution $F_{\tau(a)}$ is well known \cite[Section 7.4]{beichelt}:
\begin{equation*}
  F_{\tau(a)}(t)=P\left(\tau(a)\leq t\right)=\frac{2}{\sqrt{2\pi t}}\int_{|a|}^\infty e^{-\frac{u^2}{2t}}du.
\end{equation*}
If $a>0$, then
\begin{equation*}
  P\left(\exists s\in[0,t]:w_s\geq a\right)=F_{\tau(a)}(t).
\end{equation*}
From the continuity of $F_{\tau(a)}(t)$ with respect to $a$ it follows
that
\begin{align*}
  P\left(\exists s\in[0,t]:w_s>a\right)&=\lim_{\varepsilon\to 0}P\left(\exists s\in[0,t]:w_s\geq a+|\varepsilon|\right)\nonumber\\
  &=\lim_{\varepsilon\to
    0}F_{\tau\left(a+\left|\varepsilon\right|\right)}(t)=F_{\tau(a)}(t).
\end{align*}
Since $v_r$ is a Brownian motion with respect to $r$, it follows that
\begin{align*}
&P\left(\exists r\in[\underline r,\overline r]:v_r>\frac{2}{t}\underline b\big|v_{\underline r}=y\right)\\
   &=F_{\tau\left(\frac{2}{t}\underline b-y\right)}\left(\overline r-\underline r\right)
\end{align*}
and
\begin{align*}
&P\left(\exists r\in[\underline r,\overline r]:v_r<\frac{2}{t}\overline
      b\big|v_{\underline r}=y\right)\\
      &=F_{\tau\left(\frac{2}{t}\overline b-y\right)}\left(\overline r-\underline r\right).
\end{align*}
Hence we can rewrite \eqref{pxcapb} as
\begin{align*}
  P\left(X\cap B\right)\neq\emptyset)&=\int_{-\infty}^{\frac{2}{t}\underline b}F_{\tau\left(\frac{2}{t}\underline b-y\right)}\left(\overline r-\underline r\right)g(y)dy\\
  &+\int_{\frac{2}{t}\underline b}^{\frac{2}{t}\overline b}\,g(y)dy\\
  &+\int_{\frac{2}{t}\overline b}^\infty
  F_{\tau\left(\frac{2}{t}\overline b-y\right)}\left(\overline
    r-\underline r\right)g(y)dy.
\end{align*}
Inserting this into \eqref{upb} shows that the upper
probability $\overline P(B)$ can be expressed in terms of first
hitting times of a Brownian motion.

Finally, we derive a similar
expression for the lower probability
\begin{equation*}
  \underline P(B)=P(X\subset B).
\end{equation*}
Using again the law of total probability gives that
\begin{align*}
\underline P\left(B\right)=P\Big(\tfrac{2}{t}\underline b<v_r<\tfrac{2}{t}\overline b,\forall r\in[\underline r,\overline r]\Big)
\end{align*}
equals
\begin{align}
  \label{lpb}\int_{\tfrac{2}{t}\underline b}^{\frac{2}{t}\overline b}P\Big(\tfrac{2}{t}\underline b<v_r<\frac{2}{t}\overline b,\forall r\in[\underline r,\overline r]\big|v_{\underline r}=y\Big)g(y)dy.
\end{align}
The first exit time $\tau(a,b)$ of a Brownian motion
$\left(w_t\right)_{t\in\left(0,\infty\right)}$ for $a<0$ and $b>0$ is defined by
\begin{equation*}
  \tau(a,b):=\min\left\{t:w_t\notin\left(a,b\right)\right\}.
\end{equation*}
It has the probability distribution
\begin{equation*}
  F_{\tau(a,b)}(t)=\int_0^t\cc_s\left(\frac{b+a}{2},\frac{b-a}{2}\right)ds,
\end{equation*}
where
\begin{equation*}
  \cc_s(x,y)=\mathcal L_{\gamma\to s}^{-1}\left(\frac{\cosh\left(x\sqrt{2\gamma}\right)}{\cosh\left(y\sqrt{2\gamma}\right)}\right)
  %=\sum_{k=-\infty}^\infty (-1)^k\frac{x+y+2ky}{\sqrt{2\pi}s^{\frac{3}{2}}}e^{-\frac{(x+y+2ky)^2}{2s}}
\end{equation*}
for $x<y$ \cite[p.\,212 and 641]{borodin}. Here $\mathcal
L^{-1}$ denotes the inverse Laplace transform. By the same arguments as in the case of $\overline P$ we can rewrite
\eqref{lpb} as
\begin{equation}
  \underline P(B)=\int_{\frac{2}{t}\underline b}^{\frac{2}{t}\overline b}\left(1-F_{\tau\left(\frac{2}{t}\underline b-y,\frac{2}{t}\overline b-y\right)}\left(\overline r-\underline r\right)\right)g(y)dy.
\end{equation}

\vspace{2mm}
\noindent
{\bf Acknowledgements.}
This work was supported by the FWF doctoral program Computational Interdisciplinary Modelling W1227 as well as
the research project P-27570-N26 of FWF (The Austrian Science Fund).

\end{document}